\newtheorem{theorem}{Theorem}[section]
\newtheorem{remark}[theorem]{Remark}
\newtheorem{problem}[theorem]{Problem}
\newtheorem{definition}[theorem]{Definition}
\newtheorem{assumption}[theorem]{Assumption}
\title{\LARGE \bf Robust Steady-State-Aware Model Predictive Control for Systems with Limited Computational Resources and External Disturbances}
\author{Hassan Jafari Ozoumchelooei and Mehdi Hosseinzadeh,~\IEEEmembership{Senior Member,~IEEE}
\thanks{This work was supported by the WSU Voiland College of Engineering and Architecture through a start-up package provided to Mehdi Hosseinzadeh.}
\thanks{The authors are with the School of Mechanical and Materials Engineering, Washington State University, Pullman, WA 99164, USA (email: h.jafariozoumcheloo@wsu.edu, mehdi.hosseinzadeh@wsu.edu).}
}
\begin{document}

\maketitle
\thispagestyle{empty}
\pagestyle{empty}

\begin{abstract}
Model Predictive Control (MPC) is a powerful control strategy; however, its reliance on online optimization poses significant challenges for implementation on systems with limited computational resources. One possible approach to address this issue is to shorten the prediction horizon and adjust the conventional MPC formulation to enlarge the region of attraction. However, these methods typically introduce additional computational load. Recently, steady-state-aware MPC has been introduced to ensure output tracking and convergence to a given desired steady-state configuration while maintaining constraint satisfaction at all times without adding extra computational load. Despite its promising performance, steady-state-aware MPC does not account for external disturbances, which can significantly limit its applicability to real-world systems. This paper aims to advance the method further by enhancing its robustness against external disturbances. To achieve this, we adopt the tube-based design framework, which decouples nominal trajectory optimization from robust control synthesis, thereby requiring no additional online computational resources. Theoretical guarantees of the proposed methodology are shown analytically, and its effectiveness is assessed through simulations and experimental studies on a Parrot Bebop 2 drone.


\end{abstract}

\section{Introduction}

\lettrine{M}{odel} Predictive Control (MPC) is a widely used strategy for controlling systems with constraints on both states and control inputs \cite{camacho2013model,rawlings2017model}. At each time step, MPC solves an optimization problem over a finite prediction horizon to determine the optimal control input. Despite its effectiveness, MPC relies on online optimization, which can be computationally challenging for systems with limited computational resources \cite{hosseinzadeh2022rotec}.

Approaches to address the issue of limited computational power have been investigated over the last several decades.  Methods proposed include pre-computing the optimal solutions for different operating conditions offline and storing them for online use \cite{alessio2009survey}; using neural networks to approximate the optimal solution \cite{Wang2022,Peng2019}; using triggering mechanisms to determine when the optimization problem needs to be solved \cite{Henriksson2012,wang2020event}; and relying on inexact computations (e.g., with a fixed number of optimization iterations) to obtain a sub-optimal approximation of the solution \cite{Ghaemi2009,Cimini2017}. 

An alternative approach is to shorten the prediction horizon, thereby reducing the dimension of the optimization problem; this approach has been explored in \cite{sawma2018effect,pannek2011reducing}. Although these methods guarantee constraint satisfaction at all times, they do not simultaneously ensure output tracking and convergence to a given steady-state configuration. This issue has been addressed in \cite{amiri2024steady}. However, \cite{amiri2024steady} does not account for external disturbances, which limits its applicability and practicality in real-world scenarios. This paper aims to address this issue by further advancing the method presented in \cite{amiri2024steady}.

To achieve this, we employ tube-based MPC methodology \cite{mayne2009robust,limon2010robust,trodden2010distributed,gonzalez2011online}, which uses set-theoretic techniques to ensure that all possible trajectories of a system impacted by external disturbances stay within a series of acceptable regions known as reachability tubes. Our motivation for building upon tube-based techniques is their ability to decouple nominal trajectory optimization from robust control synthesis, thereby reducing computational demands for handling disturbances.

The key contributions of this paper are: i) the development of a novel robust MPC that can handle external disturbances, ensure constraint satisfaction, and address control objectives for any given prediction horizon length; ii) the provision of theoretical guarantees for its recursive feasibility and closed-loop stability; and iii) the analysis and validation of the proposed scheme through extensive simulation and experimental studies.

The rest of this paper is organized as follows. Section \ref{sec:PS} presents the preliminary materials and states the problem. The proposed control scheme is developed in Section \ref{sec:RSSMPC}, and its theoretical properties are investigated in Section \ref{secLProperties}. Sections \ref{sec:simulation} and \ref{sec:Experimental} present simulation and experimental results, respectively. Finally, Section \ref{sec:conclusion} concludes the paper.

\medskip\noindent
\textbf{Notations:} We use $\mathbb{R}$ to denote the set of real numbers, $\mathbb{R}_{>0}$ to denote the set of positive real numbers, and $\mathbb{R}_{\geq0}$ to denote the set of non-negative real numbers. Similarly, we use $\mathbb{Z}$, $\mathbb{Z}_{>0}$, and $\mathbb{Z}_{\geq0}$ to denote the sets of integers, positive integers, and non-negative integers, respectively. We use $x \in \mathbb{R}^n$ to indicate that $x$ belongs to the $n$-dimensional real space. Given $x \in \mathbb{R}^n$, we denote its transpose by $x^\top$. Also, $\left\Vert x \right\Vert_Q^2 = x^\top Q x$, where $Q \in \mathbb{R}^{n \times n}$. We use $\ast$ in the superscript to indicate an optimal decision. We use $\text{diag}\{a_1, a_2, \dots, a_n\}$ to denote an $n \times n$ matrix with diagonal entries $a_1, a_2, \dots, a_n \in \mathbb{R}$. For given sets $X, Y \subset \mathbb{R}^n$, $X \ominus Y := \{x \mid x - y \in X, \forall y \in Y\}$ is the Pontryagin set difference, and $X \oplus Y := \{x + y \mid x \in X, y \in Y\}$ is the Minkowski set sum. We use $I_n$ to denote the $n \times n$ identity matrix, and $\mathbf{0}$ to denote a zero matrix of appropriate dimensions.

\section{Preliminaries and Problem Statement}\label{sec:PS}
\noindent\textbf{Problem Setting:} Consider that the system to be controlled can be represented by the following disturbed discrete-time linear time-invariant model:
\begin{subequations}\label{eq1}
\begin{align}
x(t + 1) &= Ax(t) + Bu(t) + w(t), \\
y(t) &= Cx(t) + Du(t), 
\end{align}
\end{subequations}
where $x(t) \in \mathbb{R}^n$ is the state vector, $u(t) \in \mathbb{R}^p$ is the control input, $y(t) \in \mathbb{R}^m$ is the output vector, and $w(t)\in \mathcal{W}\subset\mathbb{R}^n$ is an unknown bounded disturbance vector. 

\begin{assumption}\label{assumption1}
The pair \((A, B)\) is stabilizable.
\end{assumption}

At any time instant $t$, the state and input must satisfy the following constraints:
\begin{equation}\label{eq2}
x(t) \in \mathcal{X},~~u(t) \in\mathcal{U},
\end{equation}
where $\mathcal{X} \subseteq \mathbb{R}^n$ and $\mathcal{U} \subseteq \mathbb{R}^p$ are convex and compact sets containing the origin. The disturbance-free (a.k.a. nominal) dynamics of system \eqref{eq1} can be expressed as follows:
\begin{subequations}\label{eq3}
\begin{align}
\bar{x}(t + 1) &= A\bar{x}(t) + B\bar{u}(t), \\
\bar{y}(t) &= C\bar{x}(t) + D\bar{u}(t), 
\end{align}
\end{subequations}where \(\bar{x}\), \(\bar{u}\) and \(\bar{y}\) are nominal state, input and output respectively. For any given reference $r \in \mathbb{R}^m$, suppose that there exists at least one steady-state pair $\big(\bar{x}_s, \bar{u}_s\big)$ such that
\begin{equation}
\bar{x}_s=A\bar{x}_s+B\bar{u}_s,~~r= C\bar{x}_s + D\bar{u}_s,
\label{eq4}
\end{equation}
where $\bar{x}_s \in\mathcal{X}$ and $\bar{u}_s \in\mathcal{U}$. A reference that meets this criterion is called a steady-state admissible reference; we denote the set of all such references by $\mathcal{R} \subseteq \mathbb{R}^m$. Thus, for any $r\in\mathcal{R}$, the set of all admissible steady-state configurations is:
\begin{align}
\mathcal{Z}_r=\big\{ (\bar{x}_s,\bar{u}_s)\in\mathcal{X}\times\mathcal{U}| &\bar{x}_s = A \bar{x}_s + B \bar{u}_s,\nonumber\\
&r=C \bar{x}_s + D \bar{u}_s\big\}.
\end{align}

According to Assumption \ref{assumption1}, elements of $\mathcal{Z}_r$ can be parameterized as \cite{limon2008mpc,amiri2024steady,Ferramosca2008,Ferramosca2009}:
\begin{align}
\bar{x}_s=M_1\theta,~\bar{u}_s=M_2\theta,~ r=L\theta,
\label{steady_state}
\end{align}
Where $M_1 \in \mathbb{R}^{n \times n_\theta}$, $M_2 \in \mathbb{R}^{p \times n_\theta}$, $L \in \mathbb{R}^{m \times n_\theta}$, and $\theta \in \mathbb{R}^{n_\theta}$ is the parameterizing vector, with $n_\theta$ denoting the dimensionality of $\theta$. See \cite{amiri2024steady} for guidelines on determining the matrices $M_1$, $M_2$, and $L$.



\medskip
\noindent\textbf{Tube of Trajectories:} Suppose that the control input $\bar{u}(t)$ is calculated at time instant $t$ for the nominal system given in \eqref{eq3}. Considering that the actual system may be disturbed by the external disturbance $w(t)$, the trajectory of the disturbed system is likely to deviate from the nominal prediction. To mitigate the impact of these disturbances and keep the trajectory as close to the nominal path as possible, we can determine the control input applied to the system, denoted by $u(t)$, as follows \cite{mayne2005robust,limon2010robust}:
\begin{equation}\label{eq_u}
u(t) = \bar{u}(t) + K \big(x(t) - \bar{x}(t)\big),
\end{equation}
where $K\in\mathbb{R}^{p\times n}$ is such that $A+BK$ is Schur. In this case, according to \eqref{eq1} and \eqref{eq3}, the dynamics of the error signal $e(t)\triangleq x(t) - \bar{x}(t)$ are:
\begin{equation}\label{err}
e(t+1) = (A + B K) e(t) + w(t). 
\end{equation}

Since $A+BK$ is Schur and the disturbance signal $w(t)$ is bounded, \eqref{err} implies that $e(t)$ is bounded, and thus the actual trajectory remains within a neighborhood around the nominal trajectory; such a neighborhood can be seen \cite{mayne2001robustifying} as a tube of trajectories around the nominal trajectory.

At this stage, we define the concept of a Robustly Positively Invariant (RPI) set \cite{kolmanovsky1998theory}, which will be used later to tighten the constraints and account for the external disturbance signal $w(t)$. 

\begin{definition}
The set $\mathcal{F} \subseteq \mathbb{R}^n$ is called a RPI set for system \eqref{err} if, for any $e(t) \in \mathcal{F}$ and $w(t) \in \mathcal{W}$, we have $e(k+1) \in \mathcal{F}$. The minimal RPI set for system \eqref{err}, denoted by $\mathcal{F}_\infty$, is given by \cite{mayne2005robust} as $\mathcal{F}_\infty = \bigoplus_{i=0}^{\infty} (A+BK)^i \mathcal{W}$.
\end{definition}

\begin{remark}
In general, it is impossible to obtain an explicit characterization of $\mathcal{F}_\infty$. However, one can follow the procedure detailed in \cite{Rakovic2004} to obtain an invariant approximation of the minimal RPI $\mathcal{F}_\infty$. Note that the computation of $\mathcal{F}_\infty$ is performed offline and thus does not add an online computational load to the MPC problem.
\end{remark}

\medskip\noindent\textbf{Goal:} This paper aims to address the following problem. 

\begin{problem}
Given $r \in \mathcal{R}$, the desired steady-state configuration $\big(\bar{x}_{\text{des}}, \bar{u}_{\text{des}}\big) \in \mathbb{R}^n \times \mathbb{R}^p$, and the initial condition $x(0) \in \mathcal{X}$, obtain a robust optimal control input that guides the output of system \eqref{eq1} to $r$ and achieves the steady-state configuration $\big(\bar{x}_{\text{des}}, \bar{u}_{\text{des}}\big)$, while satisfying the constraints \eqref{eq2}, despite the presence of the disturbance signal $w(t)$. 
\label{prob1}
\end{problem}

\section{Robust Steady-State-Aware Model Predictive Control}\label{sec:RSSMPC}
In this section, we propose a Robust Steady-State-Aware Model Predictive Control (RSSA-MPC) scheme to address Problem~\ref{prob1}, which is developed by building upon the recently introduced Steady-State-Aware Model Predictive Control (SSA-MPC) \cite{amiri2024steady}. The proposed RSSA-MPC scheme ensures tracking of piecewise constant references, convergence to the desired steady-state configuration, and satisfaction of state and input constraints without requiring prior knowledge of the reference and despite the presence of external disturbances.


Let $N \in \mathbb{Z}_{>0}$ be the length of the prediction horizon. At any time instant  $t$, the proposed RSSA-MPC determines the optimal parameterizing vector $\theta^\ast(t) \in \mathbb{R}^{n_\theta}$, the optimal initial condition for the nominal system $\bar{x}^\ast_0(t)$, and the optimal control sequence $\mathbf{\bar{u}}^\ast(t):=\Big[\big(\bar{u}^\ast(0|t)\big)^\top,\cdots,\big(\bar{u}^\ast(N - 1|t)\big)^\top\Big]^\top \in \mathbb{R}^{N\times p}$ by solving the following optimization problem:
\begin{subequations}\label{eq:OptimizitonProblem}
\begin{align}\label{opt}
&\theta^\ast(t),\bar{x}^\ast_0(t), \mathbf{\bar{u}}^\ast(t) =\arg\min_{\theta,\bar{x}_0,\mathbf{\bar{u}}} \Bigg( \sum_{k=0}^{N-1} \| \hat{\bar{x}}(k|t) - M_1 \theta \|^2_{Q_x} \nonumber\\
& \quad + \sum_{k=0}^{N-1} \| \bar{u}(k|t) - M_2 \theta \|^2_{Q_u} + \| \hat{\bar{x}}(N|t) - M_1 \theta \|^2_{Q_N} \nonumber\\
& \quad + \| r - L \theta \|^2_{Q_r} + \left\Vert \bar{x}_s-\bar{x}_{\text{des}}\right\Vert_{Q_{sx}}^2+\left\Vert \bar{u}_s-\bar{u}_{\text{des}}\right\Vert_{Q_{su}}^2 \Bigg),
\end{align}
subject to the following constraints:
\begin{align}
&\hat{\bar{x}}(k+1|t)=A\hat{\bar{x}}(k|t)+B\bar{u}(k|t),~\hat{\bar{x}}(0|t)=\bar{x}_0\\
&\bar{x}_0 \in x(t)\oplus\mathcal{F}_{\infty},\\
&\hat{\bar{x}}(k|t)  \in \mathcal{X} \ominus \mathcal{F}_{\infty}, \quad k \in \{0, \ldots, N - 1\},\\
&\bar{u}(k|t) \in \mathcal{U} \ominus K\mathcal{F}_{\infty}, \quad k\in \{0, \ldots, N - 1\},\\
&\big(\hat{\bar{x}}(N|t), \theta\big) \in \Omega. \label{t_cons}
\end{align}
\end{subequations}

In \eqref{eq:OptimizitonProblem}, $\hat{\bar{x}}(k|t)$ represents the predicted nominal state at instant $k$, $Q_x=Q_x^{\top} \succeq 0$ ($Q_x\in\mathbb{R}^{n\times n}$), $Q_u=Q_u^{\top} \succ 0$ ($Q_u\in\mathbb{R}^{p\times p}$), $Q_N\succeq 0$ ($Q_N\in\mathbb{R}^{n\times n}$), $Q_r=Q_r^{\top} \succ 0$ ($Q_r\in\mathbb{R}^{m\times m}$), $Q_{sx}=Q_{sx}^{\top} \succeq 0$ ($Q_{sx} \in \mathbb{R}^{n \times n}$), $Q_{su}=Q_{su}^{\top} \succeq 0$ ($Q_{su} \in \mathbb{R}^{p \times p}$), and $\bar{x}_{\text{des}}\in\mathbb{R}^n$ and $\bar{u}_{\text{des}}\in\mathbb{R}^p$ are the desired steady state and control input determined by the designer. Also, given a terminal control law $\bar{u}(t) = \kappa\big(\hat{\bar{x}}(k|t), \theta\big)$ with $\kappa : \mathbb{R}^n \times \mathbb{R}^{n_\theta} \rightarrow \mathbb{R}^p$, the robust terminal constraint set $\Omega \subseteq \mathbb{R}^n \times \mathbb{R}^{n_\theta}$ is designed to ensure that if $\big(\hat{\bar{x}}(N|t), \theta\big) \in \Omega$, then $\Big(\hat{\bar{x}}(k|t), \kappa\big(\hat{\bar{x}}(k|t), \theta\big)\Big) \in (\mathcal{X} \ominus \mathcal{F}_{\infty} \times \mathcal{U} \ominus K \mathcal{F}_{\infty})$ for all $k \geq N$. By $K\mathcal{F}_{\infty}$, we mean the matrix multiplication of $K$ by all elements of the set $\mathcal{F}_{\infty}$.

\begin{remark}\label{remarkQN}
In line with conventional MPC \cite{nicotra2018embedding, hosseinzadeh2023robust,limon2010robust}, it is effective to select \(Q_{N}\) in \eqref{opt} as the solution to the algebraic Riccati equation $Q_N = A^\top Q_N A - (A^\top Q_N B)(Q_u + B^\top Q_N B)^{-1}(B^\top Q_N A) + Q_x$, and to use the terminal control law $\kappa(\bar{x}(t), \theta) = M_2 \theta + K_{\infty}(\bar{x}(t) - M_1 \theta)$, where $K_{\infty}=-(Q_u + B^\top Q_N B)^{-1} B^\top Q_N A$. It is easy to show that $A+BK_\infty$ is Schur. 
\end{remark}


\subsection{Robust Terminal Constraint Set $\Omega$}\label{AA}

When nominal system~\eqref{eq3} is controlled by the terminal control law $\kappa(\bar{x}(t), \theta) = M_2 \theta + K_{\infty}(\bar{x}(t) - M_1 \theta)$, it evolves as follows:
\begin{equation}\label{x_lqr_1}
\bar{x}(t + 1) = (A + BK_{\infty})\bar{x}(t) + (BM_2 - BK_\infty M_1) \theta.
\end{equation}

Thus, the robust terminal constraint set $\Omega$ can be implemented as $\Omega= \mathcal{O}^{\epsilon}\cap\mathcal{O}_{\infty}$, where $\mathcal{O}^{\epsilon}$ is defined as:
\begin{align}
\mathcal{O}^{\epsilon}=\big\{\theta|&M_1 \theta \in (1 - \epsilon) (\mathcal{X} \ominus \mathcal{F}_{\infty}),\nonumber\\
&M_2 \theta \in (1 - \epsilon) (\mathcal{U} \ominus K \mathcal{F}_{\infty})\big\},
\end{align}
for some $\epsilon\in(0,1)$, and $\mathcal{O}_{\infty}$ is the tightened maximal output admissible set defined as:
\begin{align}
&\mathcal{O}_{\infty}  = \{(\bar{x}, \theta) \mid 
\hat{\bar{x}}( \gamma \mid \bar{x}, \theta) \in \mathcal{X} \ominus \mathcal{F}_{\infty},~M_2\theta \nonumber\\
&+K_\infty\left(\hat{\bar{x}}(\gamma|\bar{x},\theta)-M_1\theta\right) \in \mathcal{U} \ominus K \mathcal{F}_{\infty}, \, \gamma = 0, 1,\cdots \},
\end{align}
with $\hat{\bar{x}}(\gamma|\bar{x},\theta)$ being the predicted state at the prediction instant $\gamma$ starting from $\bar{x}$ with $\theta$ being kept constant, which, according to \eqref{x_lqr_1}, can be computed as follows: 
\begin{align}
 &\hat{\bar{x}}(\gamma \mid \bar{x}, \theta) =  (A + BK_{\infty})^{\gamma} \bar{x} \nonumber\\
& ~~~~~~+\sum_{i=1}^{\gamma} (A + BK_{\infty})^{i-1} (BM_2 - BK_{\infty} M_1) \theta,\label{x_hat_futur}
\end{align}



Note that since $\mathcal{O}_\infty$ is bounded (see \cite{amiri2024steady} and \cite{gilbert1991linear}), the robust terminal constraint set defined above is finitely determined. That is, there exists a finite index $\gamma^\ast$ such that $\mathcal{O}_{\gamma}=\mathcal{O}_{\infty},~\gamma\geq\gamma^\ast$, and thus the robust terminal constraint set can be defined as $\Omega=\mathcal{O}^{\epsilon}\cap\mathcal{O}_{\gamma^\ast}$. The index $\gamma^\ast$ can be obtained by solving a sequence of programming problems as discussed in \cite{gilbert1991linear}.


\section{Theoretical Properties of the Proposed RSSA-MPC}\label{secLProperties}
In this section, we will investigate the theoretical properties of the proposed RSSA-MPC. First, we will show that the proposed scheme is recursively feasible; starting from a feasible initial condition, the optimization problem \eqref{eq:OptimizitonProblem} remains feasible at all times. Next, we will show that the control inputs obtained from the optimization problem \eqref{eq:OptimizitonProblem} steer the output of system \eqref{eq1} toward the desired reference without violating the constraints given in \eqref{eq2}.

\begin{theorem} \label{Theorem1}
Consider the system described by \eqref{eq1} and subject to the constraints specified in \eqref{eq2}. Assume that the optimization problem \eqref{eq:OptimizitonProblem} is feasible at the time instant $t$. Then, it remains feasible for all $t\in\mathbb{Z}_{\geq0}$.
\end{theorem}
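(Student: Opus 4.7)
The plan is to establish recursive feasibility by the standard candidate-shift construction used in tube-based MPC, adapted to the extended decision variable $(\bar{x}_0,\theta,\mathbf{\bar{u}})$ of RSSA-MPC. Assuming feasibility at time $t$ with optimizers $\theta^\ast(t)$, $\bar{x}_0^\ast(t)$, and $\bar{u}^\ast(0|t),\ldots,\bar{u}^\ast(N-1|t)$, I would propose the following candidate at time $t+1$: set $\theta=\theta^\ast(t)$, $\bar{x}_0=\hat{\bar{x}}^\ast(1|t)$, $\bar{u}(k|t+1)=\bar{u}^\ast(k+1|t)$ for $k=0,\ldots,N-2$, and $\bar{u}(N-1|t+1)=\kappa(\hat{\bar{x}}^\ast(N|t),\theta^\ast(t))=M_2\theta^\ast(t)+K_\infty(\hat{\bar{x}}^\ast(N|t)-M_1\theta^\ast(t))$. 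The candidate produces the predicted trajectory $\hat{\bar{x}}(k|t+1)=\hat{\bar{x}}^\ast(k+1|t)$ for $k=0,\ldots,N-1$, and $\hat{\bar{x}}(N|t+1)$ is the one-step propagation of $\hat{\bar{x}}^\ast(N|t)$ under~\eqref{x_lqr_1} with parameter $\theta^\ast(t)$.

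The first nontrivial step is verifying $\bar{x}_0\in x(t+1)\oplus\mathcal{F}_\infty$. Substituting the applied input $u(t)=\bar{u}^\ast(0|t)+K(x(t)-\bar{x}_0^\ast(t))$ from~\eqref{eq_u} into~\eqref{eq1} and comparing with the nominal one-step update gives $\bar{x}_0-x(t+1)=(A+BK)(\bar{x}_0^\ast(t)-x(t))-w(t)$. Since $\bar{x}_0^\ast(t)-x(t)\in\mathcal{F}_\infty$ by feasibility at $t$ and $w(t)\in\mathcal{W}$, the RPI property of $\mathcal{F}_\infty$ for the error dynamics~\eqref{err} (under the standard tube-MPC symmetry convention on $\mathcal{W}$) delivers the required inclusion. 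The tightened state and input constraints for $k\leq N-2$ are inherited from feasibility at $t$ since the shifted predictions obey $\hat{\bar{x}}(k|t+1)=\hat{\bar{x}}^\ast(k+1|t)$; for $k=N-1$, both constraints follow from the $\gamma=0$ slice of $\mathcal{O}_\infty$, which is embedded in the terminal constraint $(\hat{\bar{x}}^\ast(N|t),\theta^\ast(t))\in\Omega$ at time $t$ and forces $\hat{\bar{x}}^\ast(N|t)\in\mathcal{X}\ominus\mathcal{F}_\infty$ together with $\kappa(\hat{\bar{x}}^\ast(N|t),\theta^\ast(t))\in\mathcal{U}\ominus K\mathcal{F}_\infty$.

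The last condition to verify is the terminal inclusion $(\hat{\bar{x}}(N|t+1),\theta^\ast(t))\in\Omega$. Since $\hat{\bar{x}}(N|t+1)$ is, by construction, the one-step successor of $\hat{\bar{x}}^\ast(N|t)$ along~\eqref{x_lqr_1}, a simple index shift of $\gamma$ in the defining formula~\eqref{x_hat_futur} for $\mathcal{O}_\infty$ yields $(\hat{\bar{x}}(N|t+1),\theta^\ast(t))\in\mathcal{O}_\infty$; the inclusion in $\mathcal{O}^\epsilon$ is automatic because $\mathcal{O}^\epsilon$ depends only on $\theta$, which is unchanged. The main obstacle I expect is making this last step rigorous when working with the finitely-determined implementation $\Omega=\mathcal{O}^\epsilon\cap\mathcal{O}_{\gamma^\ast}$: one must invoke the finite-determination argument of~\cite{gilbert1991linear} to guarantee that the shifted prediction still lies inside the actually computed set $\mathcal{O}_{\gamma^\ast}$, i.e., that $\mathcal{O}_{\gamma^\ast}$ is itself positively invariant under~\eqref{x_lqr_1} with $\theta$ held fixed. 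Once that is settled, the rest is bookkeeping.
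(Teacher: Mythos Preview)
Your proposal is correct and follows essentially the same route as the paper: the same shifted candidate $(\theta^\ast(t),\hat{\bar{x}}^\ast(1|t),\text{shifted }\mathbf{\bar u}^\ast\text{ with }\kappa\text{ appended})$, the RPI property of $\mathcal{F}_\infty$ for the tube-membership constraint, and positive invariance of $\Omega$ for the terminal condition. If anything you are more careful than the paper---you make explicit the symmetry convention on $\mathcal{W}$ needed for the $\bar{x}_0\in x(t+1)\oplus\mathcal{F}_\infty$ step and you flag the finite-determination invariance of $\mathcal{O}_{\gamma^\ast}$, both of which the paper absorbs into the single sentence ``the terminal constraint set $\Omega$ is positively invariant.''
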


\begin{proof}
Suppose that at time instant $t$, the RSSA-MPC problem given in \eqref{eq:OptimizitonProblem} is feasible. Thus, the optimal parameterizing vector $\theta^\ast(t)$, the optimal initial condition $\bar{x}_0^\ast(t)$, and the optimal control sequence $\mathbf{\bar{u}}^\ast(t)$ are available. Define the following sequences:
\begin{subequations}\label{eq:Feasiblet+1}
\begin{align}
\theta^\dag=&\theta^\ast(t),\\
\bar{x}_0^\dag=&A\bar{x}_0^\ast+B\bar{u}^\ast(0|t),\\
\mathbf{\bar{u}}^\dag=&\Big[\big(\bar{u}^\ast(1|t)\big)^\top,\cdots,\big(\bar{u}^\ast(N - 1|t)\big)^\top \nonumber\\
&~~\big(\kappa\big(\hat{\bar{x}}(N|t), \theta^\ast(t)\big)\big)^\top\Big]^\top.
\end{align} 
\end{subequations}

The $\big(\theta^\dag,\bar{x}_0^\dag,\mathbf{\bar{u}}^\dag\big)$ given in \eqref{eq:Feasiblet+1} is a feasible solution for the optimization problem \eqref{eq:OptimizitonProblem} at time instant $t+1$, as: i) we know that $x(t+1)\in\left(A\bar{x}_0^\ast+B\bar{u}^\ast(0|t)\right)\oplus\mathcal{F}_{\infty}$, which according to the fact that $\mathbf{0}\in\mathcal{F}_{\infty}$ implies $A\bar{x}_0^\ast+B\bar{u}^\ast(0|t)\in x(t+1)\oplus\mathcal{F}_{\infty}=\big((A\bar{x}_0^\ast+B\bar{u}^\ast(0|t))\oplus\mathcal{F}_{\infty}\big)\oplus\mathcal{F}_{\infty}$; and ii) the terminal constraint set $\Omega$ is positively invariant, which implies that $\kappa(\hat{\bar{x}}(N|t), \theta^\ast(t)) \in \mathcal{U}\ominus K\mathcal{F}_{\infty}$ and $\hat{\bar{x}}(N + 1|t) = A \hat{\bar{x}}(N|t) + B \kappa(\hat{\bar{x}}(N|t), \theta^\ast(t)) \in \mathcal{X} \ominus \mathcal{F}_{\infty}$. Thus, feasibility at time instant $t$ implies feasibility at time instant $t+1$, indicating that the proposed RSSA-MPC is recursively feasible; this completes the proof.
\end{proof}

Now, we study the closed-loop stability of the proposed RSSA-MPC and show that the steady-state configuration converges to the desired configuration if it is admissible, or else to the \textit{best} admissible configuration, while ensuring output tracking and constraint satisfaction at all times.

\begin{theorem}
Consider system \eqref{eq1} and suppose that the control input is obtained by solving the optimization problem \eqref{eq:OptimizitonProblem}. Then, if \( (\bar{x}_{\text{des}}, \bar{u}_{\text{des}}) \in (\mathcal{X} \ominus \mathcal{F}_{\infty}\times\ \mathcal{U} \ominus K\mathcal{F}_{\infty})\bigcap\mathcal{Z}_r \), the system output \( \bar{y}(t) \) will converge to \( r \), and the state \( (\bar{x}_s, \bar{u}_s) \) will approach \( (\bar{x}_{\text{des}}, \bar{u}_{\text{des}}) \) as \( k \to \infty \). Otherwise, the system output \(\bar{y}(t)\) will converge to \( L \Tilde{\theta} \), and the state \( (\bar{x}_s, \bar{u}_s) \) will approach \( (M_1 \Tilde{\theta}, M_2 \Tilde{\theta}) \) as \( k \to \infty \), where \( \Tilde{\theta} \) satisfies:
\begin{equation}
\begin{aligned}
\| \theta^\diamond - \Tilde{\theta} \| \leq \frac{f(\Tilde{\theta})}{\alpha}
\end{aligned}
\label{eq_bound}
\end{equation}
for some $\alpha\in\mathbb{R}_{\geq0}$, with $f(\theta)$ being a positive definite function such that $f(\theta)=0\Leftrightarrow\left(M_1\theta,M_2\theta\right)=\left(\bar{x}_{\text{des}},\bar{u}_{\text{des}}\right)$, \( \theta^\diamond \) being the solution to the following optimization problem:
\begin{align}\label{eq:ThetaDimond}
 \theta^\diamond=\left\{
\begin{array}{ll} 
    & \arg\min_{\theta} \left\Vert M_1\theta-\bar{x}_{\text{des}}\right\Vert_{Q_{sx}}^2\\
    &~~~~~~~~~+\left\Vert M_2\theta-\bar{u}_{\text{des}}\right\Vert_{Q_{su}}^2\\
\text{s.t.} &  (M_1\theta, M_2\theta) \in (\mathcal{X} \ominus \mathcal{F}_{\infty}\times\mathcal{U} \ominus K\mathcal{F}_{\infty})\bigcap\mathcal{Z}_r
\end{array}
 \right.
\end{align}
\end{theorem}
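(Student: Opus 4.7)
The plan is to use the optimal cost of \eqref{eq:OptimizitonProblem}, denoted $V^\ast(t)$, as a Lyapunov function for the nominal closed-loop dynamics, mirroring the standard tube-based MPC stability argument but adapted to the steady-state-aware structure in which $\theta$ is an additional decision variable. First I would evaluate the cost at the shifted candidate $(\theta^\dag,\bar{x}_0^\dag,\mathbf{\bar{u}}^\dag)$ from \eqref{eq:Feasiblet+1}, which is already known to be feasible by Theorem~\ref{Theorem1}. The stage-cost terms telescope and, because $\theta^\dag = \theta^\ast(t)$, the penalties $\|r-L\theta\|_{Q_r}^2$, $\|\bar{x}_s-\bar{x}_{\text{des}}\|_{Q_{sx}}^2$, and $\|\bar{u}_s-\bar{u}_{\text{des}}\|_{Q_{su}}^2$ contribute zero to the cost difference. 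The remaining task is to show that the change in the terminal penalty $\|\hat{\bar{x}}(N|t)-M_1\theta\|_{Q_N}^2$ together with the newly added stage cost at the horizon end is non-positive. This follows in the error coordinates $\bar{x}-M_1\theta$ from Remark~\ref{remarkQN}, since $Q_N$ solves the associated discrete algebraic Riccati equation and $\kappa(\bar{x},\theta)=M_2\theta+K_\infty(\bar{x}-M_1\theta)$ is the corresponding LQR feedback, yielding the standard inequality $\|(A+BK_\infty)e\|_{Q_N}^2 + \|e\|_{Q_x}^2 + \|K_\infty e\|_{Q_u}^2 \leq \|e\|_{Q_N}^2$ for any error $e$.

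Combining these observations produces $V^\ast(t+1)\leq V^\ast(t) - \|\hat{\bar{x}}^\ast(0|t)-M_1\theta^\ast(t)\|_{Q_x}^2 - \|\bar{u}^\ast(0|t)-M_2\theta^\ast(t)\|_{Q_u}^2$. Since $V^\ast(t)\geq 0$ is non-increasing it converges, and the stage-cost decrement is summable, forcing $\bar{u}^\ast(0|t)\to M_2\theta^\ast_\infty$ and $\hat{\bar{x}}^\ast(0|t)\to M_1\theta^\ast_\infty$ for some limiting $\theta^\ast_\infty$ (using $Q_u\succ 0$ and the parameterization \eqref{steady_state}). Recursive substitution of the nominal dynamics then shows that the predicted trajectory collapses to the steady pair $(M_1\theta^\ast_\infty, M_2\theta^\ast_\infty)$. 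Invoking \eqref{err}, the actual error $e(t)=x(t)-\bar{x}(t)$ remains in $\mathcal{F}_\infty$ indefinitely, so $(x(t),u(t))$ converges to the set $\{(M_1\theta^\ast_\infty, M_2\theta^\ast_\infty)\}\oplus(\mathcal{F}_\infty\times K\mathcal{F}_\infty)$, while the tightened constraints imposed in \eqref{eq:OptimizitonProblem} guarantee $(x(t),u(t))\in\mathcal{X}\times\mathcal{U}$ throughout.

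It remains to identify $\theta^\ast_\infty$ with $\tilde{\theta}$. At the limit only the $\theta$-dependent penalties $\|r-L\theta\|_{Q_r}^2+\|M_1\theta-\bar{x}_{\text{des}}\|_{Q_{sx}}^2+\|M_2\theta-\bar{u}_{\text{des}}\|_{Q_{su}}^2$ remain active in $V^\ast$, so $\theta^\ast_\infty$ minimizes these penalties over the set of $\theta$ such that $(M_1\theta,M_2\theta)$ is reachable under the tightened constraints, which coincides with $(\mathcal{X}\ominus\mathcal{F}_\infty)\times(\mathcal{U}\ominus K\mathcal{F}_\infty)\cap\mathcal{Z}_r$. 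In the admissible case where $(\bar{x}_{\text{des}},\bar{u}_{\text{des}})$ lies in this set, membership in $\mathcal{Z}_r$ forces $r=L\theta$ at that point, hence all three penalties vanish simultaneously and $\theta^\ast_\infty=\theta^\diamond$, giving the first part of the claim. Otherwise, I set $\tilde\theta:=\theta^\ast_\infty$ and derive \eqref{eq_bound} by comparing the first-order optimality conditions of the composite minimization solved by $\tilde\theta$ with those of \eqref{eq:ThetaDimond}, exploiting the strong convexity induced by $Q_r$ (so that $\alpha$ can be chosen as the smallest eigenvalue of $L^\top Q_r L$ on the relevant subspace) and defining $f$ as a stationarity residual engineered to vanish precisely when $(M_1\tilde\theta,M_2\tilde\theta)=(\bar{x}_{\text{des}},\bar{u}_{\text{des}})$.

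The principal obstacle I foresee is this last step: producing an explicit $f$ and constant $\alpha$ satisfying both the vanishing property and the bound simultaneously, particularly because \eqref{eq:ThetaDimond} includes constraints whose active set may change, so the comparison of stationarity conditions must carefully account for the Lagrange multipliers associated with the tightened feasibility set. Everything prior reduces to standard tube-based MPC arguments adapted to the free-$\theta$ structure of SSA-MPC.
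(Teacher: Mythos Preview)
Your Lyapunov descent argument---shifted candidate, telescoping stage costs, Riccati-based terminal decrease, and the resulting inequality $V^\ast(t+1)\leq V^\ast(t)-\|\hat{\bar{x}}^\ast(0|t)-M_1\theta^\ast(t)\|_{Q_x}^2-\|\bar{u}^\ast(0|t)-M_2\theta^\ast(t)\|_{Q_u}^2$---matches the paper's proof exactly, as does the conclusion that the limiting cost reduces to the three $\theta$-penalties and that in the admissible case all three vanish.

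The divergence is in the inadmissible case. You propose comparing first-order optimality conditions of the two problems and handling Lagrange multipliers for the tightened feasibility set; the paper avoids this entirely. Instead, it uses only the \emph{optimality inequality} at the limit: since $\theta^\diamond$ is feasible for the steady-state problem, $\|r-L\tilde\theta\|_{Q_r}^2+\|M_1\tilde\theta-\bar{x}_{\text{des}}\|_{Q_{sx}}^2+\|M_2\tilde\theta-\bar{u}_{\text{des}}\|_{Q_{su}}^2\le\|M_1\theta^\diamond-\bar{x}_{\text{des}}\|_{Q_{sx}}^2+\|M_2\theta^\diamond-\bar{u}_{\text{des}}\|_{Q_{su}}^2$. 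Because $\theta^\diamond\in\mathcal{Z}_r$ forces $r=L\theta^\diamond$, the first term becomes $\|L(\theta^\diamond-\tilde\theta)\|_{Q_r}^2\geq\underline{\lambda}(L^\top Q_rL)\,\|\theta^\diamond-\tilde\theta\|^2$. The right-hand side minus the remaining left-hand terms is then bounded via the elementary identity $\|z_1\|_Q^2-\|z_2\|_Q^2\le\overline{\lambda}(Q)\|z_1-z_2\|^2+2\|z_1-z_2\|\,\|Q\|\,\|z_2\|$, applied with $z_i=M_j\theta_i-\bar{x}_{\text{des}}$ (resp.\ $\bar{u}_{\text{des}}$). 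This yields the explicit constants $\alpha=\underline{\lambda}(L^\top Q_rL)-\overline{\lambda}(M_1^\top Q_{sx}M_1)-\overline{\lambda}(M_2^\top Q_{su}M_2)$ and $f(\theta)=2\|M_1\|\,\|Q_{sx}\|\,\|M_1\theta-\bar{x}_{\text{des}}\|+2\|M_2\|\,\|Q_{su}\|\,\|M_2\theta-\bar{u}_{\text{des}}\|$, together with the design condition that $Q_r$ be chosen large enough to make $\alpha>0$.

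So your anticipated obstacle (active-set changes, multipliers) does not arise: no stationarity conditions are needed, only a scalar inequality between cost values. Note also that your proposed $\alpha=\underline{\lambda}(L^\top Q_rL)$ is too optimistic; the $Q_{sx},Q_{su}$ terms must be subtracted, which is precisely why the paper imposes the extra eigenvalue condition on $Q_r$.
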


\begin{proof}
Let $J\big(\theta, \bar{x}_0, \bar{u} \mid x(t)\big)$ and $J\big(\theta, \bar{x}_0, \bar{u} \mid x(t + 1)\big)$ denote the cost functions of the RSSA-MPC at time instants $t$ and $t+1$, respectively. Hence, $J\big(\theta^\ast(t), \bar{x}_0^\ast(t), \mathbf{\bar{u}}^\ast(t)|x(t)\big)$ and $J\big(\theta^\ast(t+1), \bar{x}_0^\ast(t+1), \mathbf{\bar{u}}^\ast(t+1)|x(t+1)\big)$ represent the optimal solutions at times $t$ and $t+1$, respectively. 

Optimality of $\big(\theta^\ast(t + 1), \bar{x}_0^\ast(t+1), \mathbf{\bar{u}}^\ast(t + 1)\big)$ yields:
\begin{equation}
\begin{aligned}
J\big(\theta^\ast(t + 1), \bar{x}_0^\ast(t+1), \mathbf{\bar{u}}^\ast(t + 1) \mid x(t + 1)\big) &   \\
\leq J\big(\theta^\ast(t), \bar{x}_0^\ast(t), \mathbf{\bar{u}}^\ast(t + 1) \mid x(t + 1)\big).
\end{aligned}
\end{equation}

Subtracting $J\big(\theta^\ast(t), \bar{x}_0^\ast(t), \mathbf{\bar{u}}^\ast(t) \mid x(t)\big)$ from both sides of this last inequality gives:
\begin{equation}\label{eq:Proof1}
\begin{aligned}
J\big(\theta^\ast(t + 1), \bar{x}_0^\ast(t+1), \mathbf{\bar{u}}^\ast(t + 1) \mid x(t + 1)\big)&\\
- J\big(\theta^\ast(t), \bar{x}_0^\ast(t), \mathbf{\bar{u}}^\ast(t) \mid x(t)\big) & \\
\leq J\big(\theta^\ast(t), \bar{x}_0^\ast(t), \mathbf{\bar{u}}^\ast(t + 1) \mid x(t + 1)\big)&\\
- J\big(\theta^\ast(t), \bar{x}_0^\ast(t), \mathbf{\bar{u}}^\ast(t) \mid x(t)\big).
\end{aligned}
\end{equation}

It is obvious that when $\bar{x}_0^\ast(t)=\bar{x}_0^\ast(t+1)$, we have $\bar{u}^\ast(k + 1 \mid t) = \bar{u}^\ast(k \mid t + 1)$ and $\hat{\bar{x}}(k + 1 \mid t) = \hat{\bar{x}}(k \mid t + 1)$, for $k = 0, \cdots, N - 2$. Thus, according to \eqref{opt}, it follows from \eqref{eq:Proof1} that:
\begin{align}
&J\big(\theta^\ast(t\!+\!1), \bar{x}_0^\ast(t\!+\!1), \mathbf{\bar{u}}^\ast(t\!+\!1) \mid x(t\!+\!1)\big) \nonumber\\
&\quad - J\big(\theta^\ast(t), \bar{x}_0^\ast(t), \mathbf{\bar{u}}^\ast(t) \mid x(t)\big) \leq \nonumber \\
&\left\| \hat{\bar{x}}(N \!\mid\! t\!+\!1) \!-\! M_1 \theta^\ast(t) \right\|^2_{Q_N}- \left\| \hat{\bar{x}}(N \!\mid\! t) \!-\! M_1 \theta^\ast(t) \right\|^2_{Q_N} \nonumber\\
&+ \left\| \hat{\bar{x}}(N\!-\!1 \!\mid\! t\!+\!1) \!-\! M_1 \theta^\ast(t) \right\|^2_{Q_x} \nonumber\\
&+ \left\| \bar{u}(N\!-\!1 \!\mid\! t\!+\!1) \!-\! M_2 \theta^\ast(t) \right\|^2_{Q_u} \nonumber\\
& - \left\| \hat{\bar{x}}^\ast(0 \!\mid\! t) \!-\! M_1 \theta^\ast(t) \right\|^2_{Q_x}- \left\| \bar{u}^\ast(0 \!\mid\! t) \!-\! M_2 \theta^\ast(t) \right\|^2_{Q_u}.
\label{closed-loop-stability}
\end{align}

As shown in~\cite{mayne2000constrained,amiri2024steady}, when \( Q_N \) is derived from the algebraic Riccati equation mentioned in Remark \ref{remarkQN}, The right-hand side of inequality \eqref{closed-loop-stability} is less than or equal to zero. Thus, we have:
\begin{equation}
\begin{split}
&J\big(\theta^\ast(t\!+\!1), \bar{x}_0^\ast(t\!+\!1), \mathbf{\bar{u}}^\ast(t\!+\!1) \mid x(t\!+\!1)\big) \\
&\quad - J\big(\theta^\ast(t), \bar{x}_0^\ast(t), \mathbf{\bar{u}}^\ast(t) \mid x(t)\big) \leq \\
&- \left\| \hat{\bar{x}}^\ast(0 \!\mid\! t) \!-\! M_1 \theta^\ast(t) \right\|^2_{Q_x} \\
&\quad - \left\| \bar{u}^\ast(0 \!\mid\! t) \!-\! M_2 \theta^\ast(t) \right\|^2_{Q_u}\leq0.
\end{split}
\label{closed-loop-stability2}
\end{equation}

Next, we will show that the only entire trajectory that satisfies $J\big(\theta^\ast(t+1), \bar{x}_0^\ast(t+1), \mathbf{\bar{u}}^\ast(t+1) \mid x(t+1)\big)-J\big(\theta^\ast(t), \bar{x}_0^\ast(t), \mathbf{\bar{u}}^\ast(t) \mid x(t)\big)\equiv0$ is the desired steady-state configuration $(\bar{x}_{\text{des}},\bar{u}_{\text{des}})$ if  \( (\bar{x}_{\text{des}},\bar{u}_{\text{des}}) \in (\mathcal{X} \ominus \mathcal{F}_{\infty}\times \mathcal{U} \ominus K\mathcal{F}_{\infty})\bigcap\mathcal{Z}_r\), and 
is  \( (M_1 \Tilde{\theta}, M_2 \Tilde{\theta}) \) otherwise, where $\Tilde{\theta}$ is as in \eqref{eq_bound}.

Suppose that there exists a time instant $t^\circ$ such that $J\big(\theta^\ast(t+1), \bar{x}_0^\ast(t+1), \mathbf{\bar{u}}^\ast(t+1) \mid x(t+1)\big)-J\big(\theta^\ast(t), \bar{x}_0^\ast(t), \mathbf{\bar{u}}^\ast(t) \mid x(t)\big)=0$ for $t\geq t^\circ$. Following arguments similar to \cite{amiri2024steady}, it can be shown that 
\begin{align}\label{eq:Proof2}
&J\big(\theta^\ast(t), \bar{x}_0^\ast(t), \mathbf{\bar{u}}^\ast(t) \mid x(t)\big)=\| r - L \theta^\ast(t) \|^2_{Q_r} \nonumber\\
&+ \left\Vert M_1\theta^\ast(t)-\bar{x}_{\text{des}}\right\Vert_{Q_{sx}}^2+\left\Vert M_2\theta^\ast(t)-\bar{u}_{\text{des}}\right\Vert_{Q_{su}}^2,
\end{align}
for all $t\geq t^\circ$. Also, optimality at time instant $t^\circ$ implies that:
\begin{align}\label{eq:Proof3}
J\big(\theta^\ast(t), \bar{x}_0^\ast(t), \mathbf{\bar{u}}^\ast(t) \mid x(t)\big)\leq J\big(\theta^\diamond(t), \bar{x}_0^\ast(t), \mathbf{\bar{u}}^\ast(t) \mid x(t)\big),
\end{align}
where $\theta^\diamond$ is as in \eqref{eq:ThetaDimond}. Since $(M_1\theta^\diamond, M_2\theta^\diamond) \in\mathcal{Z}_r$, it follows from \eqref{eq:Proof2} and \eqref{eq:Proof3} that for $t\geq t^\circ$ we have:
\begin{align}\label{eq:Proof4}
&\| r - L \theta^\ast(t) \|^2_{Q_r}+ \left\Vert M_1\theta^\ast(t)-\bar{x}_{\text{des}}\right\Vert_{Q_{sx}}^2\nonumber\\
&+\left\Vert M_2\theta^\ast(t)-\bar{u}_{\text{des}}\right\Vert_{Q_{su}}^2\leq\left\Vert M_1\theta^\diamond-\bar{x}_{\text{des}}\right\Vert_{Q_{sx}}^2\nonumber\\
&+\left\Vert M_2\theta^\diamond-\bar{u}_{\text{des}}\right\Vert_{Q_{su}}^2.
\end{align}

First, suppose that \( (\bar{x}_{\text{des}}, \bar{u}_{\text{des}}) \in (\mathcal{X} \ominus \mathcal{F}_{\infty}\times\ \mathcal{U} \ominus K\mathcal{F}_{\infty})\bigcap\mathcal{Z}_r \). In this case, it follows from \eqref{eq:ThetaDimond} that $M_1\theta^\diamond=\bar{x}_{\text{des}}$ and $M_2\theta^\diamond=\bar{u}_{\text{des}}$. Thus, from \eqref{eq:Proof4}, we also have $r=L\theta^\ast(t)$, $M_1\theta^\ast(t)=\bar{x}_{\text{des}}$ and $M_2\theta^\ast(t)=\bar{u}_{\text{des}}$ for $t\geq t^\circ$. 

Next, assume that \( (\bar{x}_{\text{des}}, \bar{u}_{\text{des}}) \not\in (\mathcal{X} \ominus \mathcal{F}_{\infty}\times\ \mathcal{U} \ominus K\mathcal{F}_{\infty})\bigcap\mathcal{Z}_r \). In this case, since $r=L\theta^\diamond$, it can be shown that\footnote{Given $z_1,z_2\in\mathbb{R}^n$ and $Q\succeq0$, we have: $\left\Vert z_2\right\Vert_Q^2-\left\Vert z_2\right\Vert_Q^2=\left\Vert z_2\right\Vert_Q^2-2\left\Vert z_2\right\Vert_Q^2+\left\Vert z_2\right\Vert_Q^2-2z_1^\top Qz_2+2z_1^\top Qz_2=\left\Vert z_1-z_2\right\Vert_Q^2+2(z_1^\top-z_2^\top)Qz_2\leq\overline{\lambda}(Q)\left\Vert z_1-z_2\right\Vert^2+2\left\Vert z_1-z_2\right\Vert\left\Vert Q\right\Vert\left\Vert z_2\right\Vert$, where $\overline{\lambda}(Q)$ is the largest eigenvalue of matrix $Q$.}:
\begin{align}
&\underline{\lambda}(L^\top Q_rL)\left\Vert\theta^\diamond-\theta^\ast(t)\right\Vert^2\leq\overline{\lambda}(M_1^\top Q_{sx}M_1)\left\Vert\theta^\diamond-\theta^\ast(t)\right\Vert^2\nonumber\\
&+\overline{\lambda}(M_2^\top Q_{su}M_2)\left\Vert\theta^\diamond-\theta^\ast(t)\right\Vert^2\nonumber\\
&+2\left\Vert M_1\right\Vert\left\Vert\theta^\diamond-\theta^\ast(t)\right\Vert\left\Vert Q_{sx}\right\Vert\left\Vert M_1\theta^\ast(t)-\bar{x}_{\text{des}}\right\Vert\nonumber\\
&+2\left\Vert M_2\right\Vert\left\Vert\theta^\diamond-\theta^\ast(t)\right\Vert\left\Vert Q_{su}\right\Vert\left\Vert M_2\theta^\ast(t)-\bar{u}_{\text{des}}\right\Vert,\label{eq:Proof5}
\end{align}
where $\underline{\lambda}(\cdot)$ and $\overline{\lambda}(\cdot)$ indicate the smallest and largest eigenvalues, respectively. Thus, if $Q_r$ is selected such that $\underline{\lambda}(L^\top Q_rL)>\overline{\lambda}(M_1^\top Q_{sx}M_1)+\overline{\lambda}(M_2^\top Q_{su}M_2)$, \eqref{eq:Proof5} implies that $\left\Vert\theta^\diamond-\theta^\ast(t)\right\Vert$ satisfies the bound \eqref{eq_bound} with $f(\theta)$ and $\alpha$ as: 
\begin{align}
f(\theta)=&2\left\Vert M_1\right\Vert\left\Vert Q_{sx}\right\Vert\left\Vert M_1\theta-\bar{x}_{\text{des}}\right\Vert\nonumber\\
&+2\left\Vert M_2\right\Vert\left\Vert Q_{su}\right\Vert\left\Vert M_2\theta-\bar{u}_{\text{des}}\right\Vert,
\end{align}
and 
\begin{align}\label{eq:alpha}
\alpha=\underline{\lambda}(L^\top Q_rL)-\overline{\lambda}(M_1^\top Q_{sx}M_1)-\overline{\lambda}(M_2^\top Q_{su}M_2).
\end{align}
\end{proof}


\begin{remark}\label{remark:UpperBound}
Since $Q_r$, $Q_{sx}$, and $Q_{su}$ are design parameters, the upper-bound in \eqref{eq_bound} can be made arbitrarily small. As a result, the output tracks $r$ and the steady-state configuration converges to the best admissible one. 
\end{remark}




\section{Simulation Studies}\label{sec:simulation}

The main goal of this section is to analyze the performance and effectiveness of the proposed RSSA-MPC scheme for position control of a Parrot Bebop 2 drone. 

Using a sampling period to 0.2 seconds, the position dynamics of the Parrot Bebop 2 drone can be expressed \cite{AmiriMECC2024,Momani2024} as a linear system in the form of \eqref{eq1}, where $x=[p_x ~ \dot{p}_x ~ p_y ~ \dot{p}_y ~ p_z ~ \dot{p}_z]^\top$ with $p_x,p_y,p_z\in\mathbb{R}$ being  X, Y, and Z positions in the global Cartesian coordinate,  $u=[u_x~u_y~u_z]^\top$  with $u_x,u_y,u_z\in\mathbb{R}$ being control inputs on X, Y, and Z directions, $C=I_6$, $D=\mathbf{0}$, and 
\begin{align*}
&A=\begin{bmatrix}
1 & 0.19895 & 0 & 0 & 0 & 0 \\
0 & 0.98952  & 0 & 0 & 0 & 0 \\
0 & 0 & 1.000 & 0.19963 & 0 & 0 \\
0 & 0 & 0 & 0.99627 & 0 & 0 \\
0 & 0 & 0 & 0 & 1.000 & 0.16816\\
0 & 0 & 0 & 0 & 0 & 0.69946
\end{bmatrix},\\
&B=\begin{bmatrix}
-0.10917348 & 0 & 0  \\
-1.08982035 & 0 & 0  \\
0 & -0.141040918& 0  \\
0 & -1.409531141 & 0  \\
0 & 0 & -0.030967224  \\
0 & 0 & -0.292295416 
\end{bmatrix}.
\end{align*}

The control inputs $u_x(t)$, $u_y(t)$, and $u_z(t)$ should satisfy the following constraints for all $t$: $|u_x(t)|\leq0.05$ , $|u_y(t)|\leq0.05$, and $|u_z(t)|\leq0.6$. Also, we assume that the position along Y direction is constrained as $|p_y(t)|\leq1.8$. We set $r = [1~0~2~0~1.5~0]^T$, the weighting matrices $Q_x=\text{diag}\{5,5,5,5,5,5\}$, $Q_u=\text{diag}\{30,20,1\}$, $Q_{sx}=\text{diag}\{0,0,0,0,0,0\}$, and $Q_{su}=\text{diag}\{1,1,1\}$, desired steady-state configuration  $\bar{x}_{\text{des}}=r$ and $\bar{u}_{\text{des}}=[0~0~0]^\top$, and the length of the prediction horizon is $N=10$.

We assume that $w(t)=[0~w_x(t)~0~w_y(t)~0~w_z(t)]^\top$, which implies that the disturbance signals $|w_x(t)|\leq\beta$, $|w_y(t)|\leq\beta$, and $|w_z(t)|\leq\beta$ affect the velocity along the X, Y and Z directions, respectively, with $\beta$ being the maximum magnitude of the disturbance in each direction. 


\begin{figure}[t]
    \centering
    \includegraphics{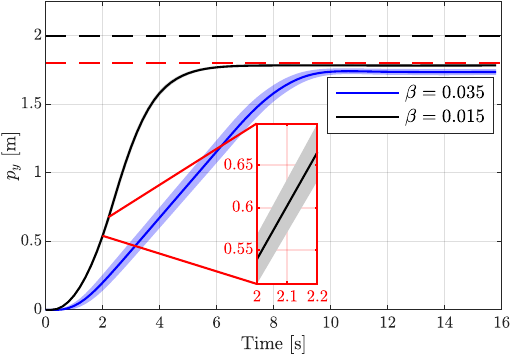}
    \caption{Time profile of $p_y$ at different disturbance magnitudes. The solid lines show the average response, the shaded regions represent the standard deviation, and the red and black dashed lines show the constraint and reference Y position, respectively.}
    \label{x3-time}
\end{figure}

\begin{figure}[t]
    \centering
    \includegraphics{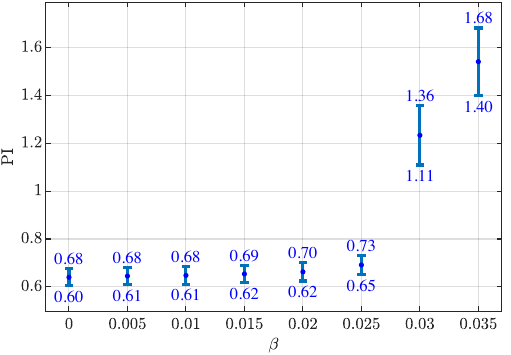}
    \caption{Performance Index (PI) as a function of disturbance magnitude $\beta$.}
    \label{tracking_error_dist}
\end{figure}


To provide a quantitative analysis, we consider 1,000 experiments with initial condition $x(0)=[-1~0~0~0~0.5~0]^\top$, wherein each experiment $w_x$, $w_y$, and $w_z$ are uniformly selected from the interval $[-\beta,\beta]$.

Fig. \ref{x3-time} illustrates the time-profile of $p_y$ for $\beta = 0.015$ and $\beta = 0.035$, where the solid lines represent the mean response, while the shaded areas denote the standard deviation. As shown in the figure, increasing the magnitude of the disturbance signal results in a slower response, reflecting the system's need to adjust to the disturbance. Furthermore, as expected, larger disturbances lead to larger uncertainty, which in turn increases the tracking error.

\begin{figure}[!t]
    \centering
    \includegraphics{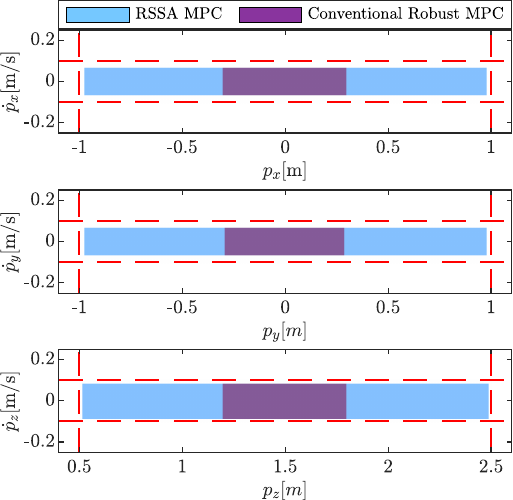}
    \caption{Region of Attraction for RSSA-MPC and the convectional robust MPC \cite{mayne2005robust} with $N=10$. The red dashed line represents the constraint.}
    \label{roa}
\end{figure}

This observation is illustrated in Fig. \ref{tracking_error_dist}, where Performance Index (PI) is $PI = \frac{1}{T} \sum_{t = 0}^{t = T} \| x(t) - \bar{x}_r \|^{2}$, with $T$ being the duration of the experiments. This figure shows the mean and standard deviation of PI for 1,000 experiments with initial conditions $x(0)=[\sigma_1~0~\sigma_2~0~\sigma_3~0]^\top$, where in each experiment $\sigma_1$, $\sigma_2$, and $\sigma_3$ are uniformly selected from the intervals [-0.5,0], [0,0.5], and [0.5,1], respectively. Fig. \ref{tracking_error_dist} reinforces the observation that larger disturbance magnitudes lead to increased uncertainty, resulting in a higher PI for the proposed RSSA-MPC.

We also analyze the Region of Attraction (ROA) of the proposed RSSA-MPC, defined as the set of initial conditions for which the MPC problem remains feasible. Fig. \ref{roa} describes the ROA for both the proposed RSSA-MPC and the conventional robust MPC \cite{mayne2005robust} with $N=10$, calculated under identical conditions, including a disturbance magnitude of $\beta = 0.02$, state constraints indicated by the red dashed lines, and the previously mentioned control input constraints. As shown in Fig. \ref{roa}, the proposed RSSA-MPC demonstrates a larger ROA compared to the conventional robust MPC outlined in \cite{mayne2005robust}.


\section{Experimental Results}\label{sec:Experimental}
This section aims to experimentally validate the proposed RSSA-MPC by evaluating its performance in controlling the position of the Parrot Bebop 2 drone.

Our experimental setup is shown in Fig. \ref{fig:Network}. We use the \texttt{OptiTrack} system with ten \texttt{Prime$^\text{x}$ 13} cameras operating at a frequency of 120 Hz. These cameras provide 3D accuracy within $\pm0.02$ millimeters. The computing unit consists of a 13th Gen $\text{Intel}^{\text{\textregistered}}$ $\text{Core}^{\text{\texttrademark}}$ i9-13900K processor with 64 GB of RAM, running the \texttt{Motive} software to analyze and interpret the camera data. We use the ``Parrot Drone Support from MATLAB" package \cite{MATLAB} to send control commands to the Parrot Bebop 2 via WiFi, utilizing the \texttt{move($\cdot$)} command. Communication between \texttt{Motive} and MATLAB is established via User Datagram Protocol (UDP) using the \texttt{NatNet} service.



Experimental results for $\beta = 0.015$ and $\beta = 0.035$ are shown in Fig.~\ref{real_drone_015} and Fig.~\ref{real_drone_035}, respectively, illustrating both the drone's position and control inputs. In these experiments, disturbances are introduced as variations in the control inputs. As demonstrated, the proposed RSSA-MPC effectively guides the Parrot Bebop 2 drone to the desired location while consistently satisfying state and input constraints. The results highlight the robustness of the RSSA-MPC against external disturbances.

\begin{figure}[!t]
    \centering
    \includegraphics[width=8.5cm]{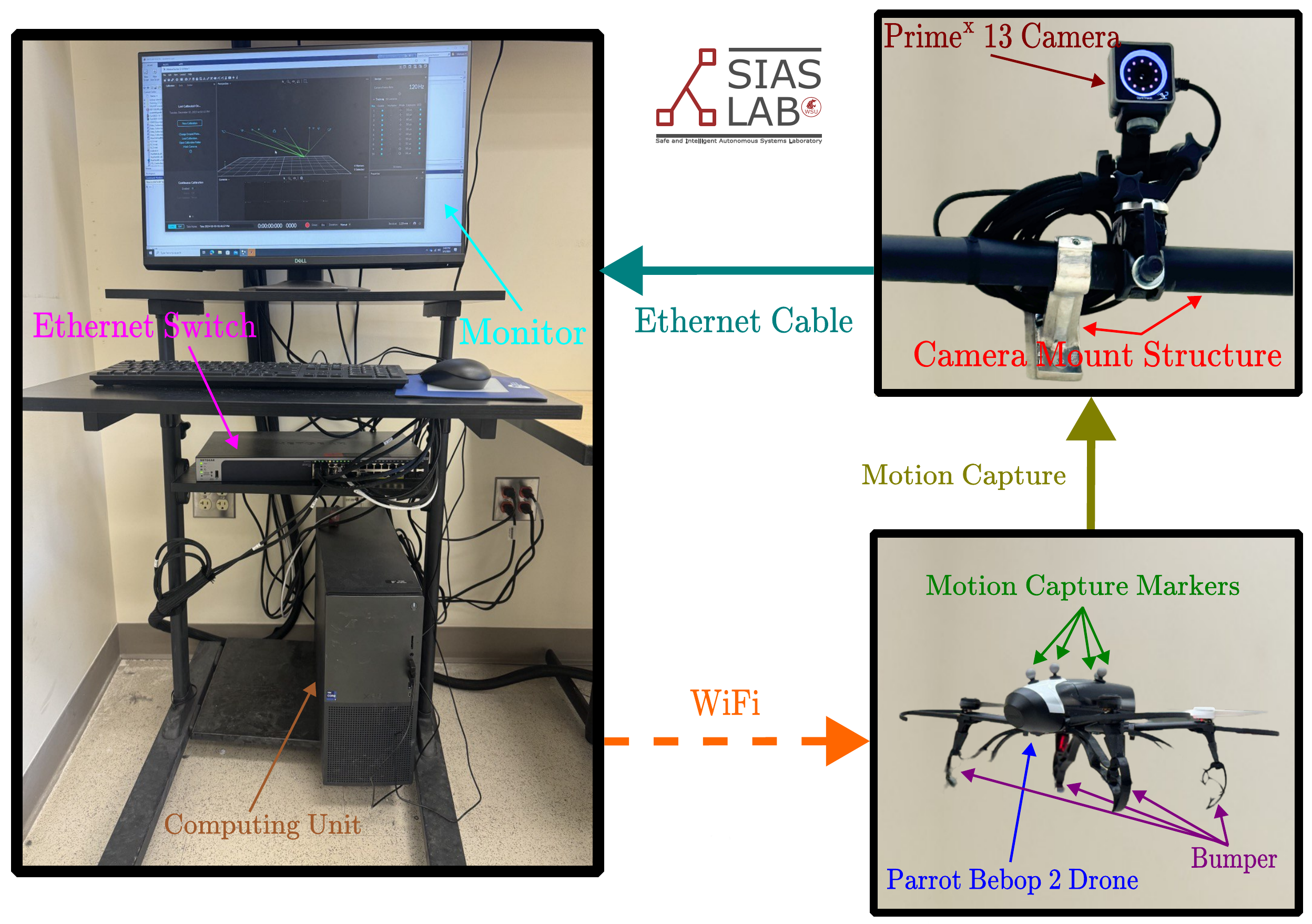}
    \caption{Overview of the experimental setup utilized to experimentally validate the proposed RSSA-MPC.}
    \label{fig:Network}
\end{figure}

\begin{figure*}[!t]
     \centering
     \includegraphics{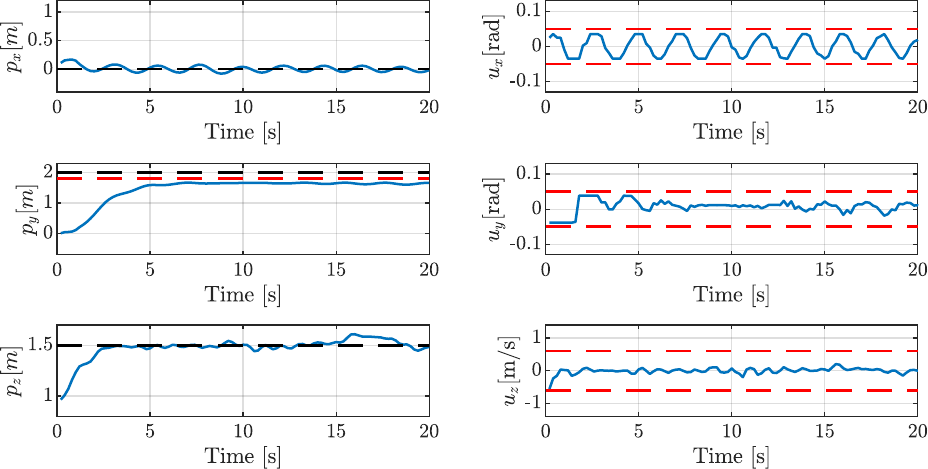}
     \caption{Time profiles of the drone's X, Y, and Z positions and control inputs for $\beta = 0.015$, with red dashed lines indicating constraints and black dashed lines representing reference positions.}
     \label{real_drone_015}
\end{figure*}

\begin{figure*}[!t]
     \centering
     \includegraphics{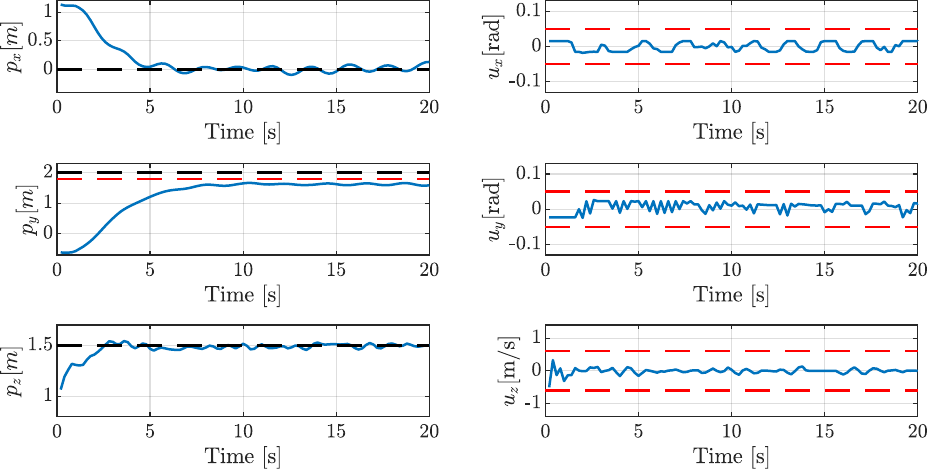}
     \caption{Time profiles of the drone's X, Y, and Z positions and control inputs for $\beta = 0.035$, with red dashed lines indicating constraints and black dashed lines representing reference positions.}
     \label{real_drone_035}
\end{figure*}

\section{Conclusion}\label{sec:conclusion}
In this paper, we introduced an enhancement to the steady-state-aware MPC which has been developed to mediate between system performance and constraint satisfaction in systems with limited computing capacity. More precisely, we adopted a tube-based analysis to introduce robustness to the steady-state-aware MPC, leading to the development of the RSSA-MPC scheme. It has been shown that the proposed methodology effectively decouples nominal trajectory optimization from robust control synthesis, maintaining the same computational complexity as conventional steady-state-aware MPC; thus, it remains appropriate for systems with limited computational resources. Analytical guarantees were provided, which demonstrate the theoretical validity of the proposed method. Simulations and experimental validation on a Parrot Bebop 2 drone demonstrated the robustness and effectiveness of the proposed RSSA-MPC, highlighting its applicability to real-world systems subject to disturbances.



\bibliographystyle{ieeetr}
\bibliography{references}

\end{document}